\DeclareMathOperator{\sgn}{sgn}
\DeclareMathOperator{\Real}{Re}
\DeclareMathOperator{\R}{\mathbb{R}}
\newtheorem{theorem}{Theorem}[section]
\newtheorem{lemma}[theorem]{Lemma}
\theoremstyle{definition}
\theoremstyle{remark}
\newtheorem{remark}[theorem]{Remark}
\numberwithin{equation}{section}
\begin{document}

\def\now{23 May 2021}

\title[\now \hfill 
Fourier majorants
\hfill]
{
Fourier majorants
that match norms}
\author
[Fournier
]
{John J. F. Fournier
}
\address{Department of Mathematics\\
University of British Columbia\\exponents
Vancouver, Canada V6T 1Z2}
\email{fournier@math.ubc.ca}
\thanks{Results partially announced at the June 2001 meeting
of the Canadian Mathematical Society.}
\thanks{
Research partially
supported by NSERC grant 4822.}
\subjclass[2010]{Primary {42A32, 43A15};
Secondary {47B10}.
}

\date{\now}

\begin{abstract}
Denote the
coefficients in the complex form of the  Fourier series of a
function~$f$ on the interval~$[-\pi, \pi)$ by~$\hat f(n)$.
It is known that if~$p = 2j/(2j-1)$ for some
integer~$j>0$, then for each function~$f$ in~$L^p$ there exists 
another
function $F$ in~$L^p$ that majorizes~$f$ in the sense that~$\hat
F(n) \ge |\hat f(n)|$ for all~$n$, but that also satisfies~$\|F\|_p
\le \|f\|_p$.
Rescaling~$F$ suitably then gives a majorant with the same~$L^p$ norm as~$f$.
We show how
that
majorant
comes from a variant in~$L^{2j}$ of the notion of exact majorant in~$L^2$.
\end{abstract}

\maketitle

\markleft{\hfill John J.F. Fournier\hfill \now}

\section{Introduction}
\label{sec:intro}

{\allowdisplaybreaks}

Thus~$\hat f(n) = \int_{-\pi}^\pi
f(\theta)e^{-in\theta}\,d\theta/2\pi$. Call~$F$ a {\em majorant}
of~$f$,
and~$f$ a {\em minorant}
of~$F$,
when~$|\hat f(n)| \le \hat F(n)$
for all integers~$n$. In that case,~$\|f\|_2 \le \|F\|_2$; also, if~$j$ is
an integer greater than~$1$, then $F^j$ majorizes~$f^j$, and hence
\[(\|f\|_{2j})^{2j} = (\|f^j\|_2)^2 \le (\|F^j\|_2)^2 =
(\|F\|_{2j})^{2j}.\]
Finally,~$\|f\|_\infty \le \|F\|_\infty$ when~$F$ majorizes~$f$.

This pattern does not persist for other 
exponents.
Hardy and Littlewood~\cite{HaL}
considered the {\em upper majorant property},
asserting that there is a
constant~$U(p)$ so that
\begin{equation}
\label{eq:upper}
\|f\|_p \le U(p) \|F\|_p
\end{equation}
whenever~$F$ majorizes~$f$.
They gave an example
showing that if
this property
holds in~$L^3$ then the constant~$U(3)$
must be strictly larger than~$1$.
Other
work~\cite{Bo, Ba}
revealed
that the property fails
for 
the exponents~$p$
in the interval~$(0, \infty)$
that are not
even integers.
See~\cite[pp.~131--134]{Mt}, \cite{GR}, \cite{Gr},\cite{MS},~\cite{CKY} and~\cite{Ebe}
for refinements
of these results, 
complements to them
and 
connections with other 
questions.

Here we consider the {\em lower majorant property},
also introduced in~\cite{HaL}.
It holds when there is a constant~$L(p)$ so that
each function $f$ in $L^p$
has a majorant $F$ with
\begin{equation}
\label{eq:lowmajor}
\|F\|_p \le L(p)\|f\|_p.
\end{equation}
This is clearly true when~$p=2$ with~$L(2) = 1$, with~$F$ equal to
the {\em exact majorant} of~$f$ given by~$\hat F = |\hat f|$.
It also holds when~$p = 1$,
since one can
write
a given function~$f$
in~$L^1$
as the square of a function~$g$ in~$L^2$,
form the exact
majorant~$G$ of~$g$,
and
let~$F = G^2$.
Then
\begin{gather}
\notag
\|F\|_1 =  \frac{1}{2\pi}\int_{-\pi}^\pi
|G(t)|^2\,dt
= (\|G\|_2)^2
\\
\notag
=(\|g\|_2)^2
= \frac{1}{2\pi}\int_{-\pi}^\pi
|g(t)|^2\,dt
= \|f\|_1.
\end{gather}
The sequence~$\hat F$ is equal to the convolution of two copies of the sequence~$\hat G$, and similarly for~$\hat f$ and~$\hat g$.
Since there is no cancelation in the convolution giving~$F$,
the
fact that~$\hat G
=
|\hat g|$
implies that~$\hat F \ge |\hat f|$.

When~$p \in [1, \infty)$, a simple duality argument \cite[Section 3]{Bo}
shows that if~$L^p$ has the lower majorant property,
then its dual space~$L^{p'}$ has the
upper majorant property with~$U(p') \le L(p)$. By the
work cited 
earlier,
this can only happen when~$p' = \infty$ or~$p'$
is an even integer, thus ruling out all exponents~$p$ in the
interval~$(1, \infty)$ except for~$p = 2$ and the
{\em 
special
exponents}
with~$p=2j/(2j-1)$, where~$j$ is an integer strictly greater
than~$1$. 
When~$1 < p < \infty$, a
less simple
duality argument~\cite{HaL, Ba} shows
that the upper majorant property for~$L^{p'}$ implies the lower
majorant property for~$L^p$,
with~$L(p) \le U(p')$.
In particular,
it holds,
with~$L(p)=1$,
for the special exponents.

That duality proof and alternatives~\cite{Ob, DGLPQ, LuP} to it
do not include a
description
of a suitable majorant of a given function for these values of~$p$.
For various good reasons, those arguments covered
general exponents~$p$ and~$p'$, 
but
the duality is now known to mainly have impact for the 
special
values of~$p$.

In~\cite{FV}, this hindsight was exploited by
showing in
those
cases
that a 
certain
product of~$2j-1$ functions in~$L^{2j}$ gives
a majorant with
minimal norm in~$L^{2j/(2j-1)}$.
Suitably rescaling
that product
yields
a majorant with the same~$L^{2j/(2j-1)}$ norm as the original function.

In this paper, we offer a more direct description of
that
particular
majorant.
We state our conclusions
about it
in the next section,
and prove those conclusions
in the following section.
Then we
reformulate
them as statements
about convolution on the integers, and comment on how
they extend
to all discrete groups, abelian or not.
In
an appendix,
we discuss
the modifications needed on groups like~$\R^n$.

\section{Dual maximizers}
\label{sec:Maximizers}

We suppose in the rest of this paper that~$1 < p < \infty$.
In this paragraph, we summarize
some of
the reasoning
in~\cite{HaL}.
First, if~$f \in L^p$ and~$g \in L^{p'}$, then the
series~$\sum_n \hat f(n)\overline{\hat g(n)}$
converges to$\int_{-\pi}^\pi f(t)\overline{g(t)}dt/2\pi$.
By H\"older's inequality,
\begin{equation}
\label{eq:Parseval}
\left|\sum_n \hat f(n)\overline{\hat g(n)}\right|
\le \|f\|_p\|g\|_{p'}.
\end{equation}
In the special case where~$p'$ is even and $\hat G \ge 0$, it is also true that
\begin{equation}
\label{eq:LinearizedUpper}
\sum_n |\hat f(n)|\hat G(n) \le \|f\|_p\|G\|_{p'},
\end{equation}
because each term~$|\hat f(n)|\hat G(n)$
can be rewritten as~$\hat f(n)\overline{\hat g(n)}$
for a function~$g$ that is majorized by~$G$.

Recall
that
a 
complex function-valued function~$f$ factors as~$|f|\sgn(f)$, where~$\sgn(f)$ vanishes off the support of~$f$.
If~$f \in L^p$, then
letting~$g = |f|^{p-1}\sgn(f)$
puts~$g$ in ~$L^{p'}$, and makes~$f = |g|^{p'-1}\sgn(g)$.

In the special cases where~$p' = 2j$ for some positive integer~$j$, this simplifies to
\begin{gather}
\notag
f = |g|^{2j-1}\sgn(g) 
= |g|^{2j-2}|g|\sgn(g)
\\
\notag
=|g|^{2(j-1)}g = (g{\overline g})^{j-1}g.
\end{gather}
Moreover,~$|f|^{2j/(2j-1)} = |g|^{2j}$,
and~$\|f\|_{2j/(2j-1)}= \|g\|_{2j}^{2j-1}$.

Denote the set of functions~$G$ in~$L^{2j}$ for which~$\hat G \ge 0$
by~$PD^{2j}$.
Also consider
its closed subset~$PD_f^{2j}$
where~$\hat G$
vanishes off the support of~$\hat f$,
and the closed subset~$PD^{2j}_{(g, f)}$
of~$PD_f^{2j}$
where~$G$
has the same norm in~$L^{2j}$
as~$g$.
%
Let~$\Phi_f$
be the function on~$PD^{2j}$ that maps~$G$ 
to~$\sum_n |\hat f(n)|\hat G(n)$,
which is finite by inequality~\eqref{eq:LinearizedUpper}.

\begin{theorem}
\label{th:Maximizer}

There
is a unique function~$G$ in the set~$PD^{2j}(f, g)$ for which~$\Phi_f(G)$ is maximal in that
set.
The
product~$F := (G\overline G)^{j-1}G$
then has the same norm in~$L^{2j/(2j-1)}$ as~$f$,
and~$F$ majorizes~$f$.

\end{theorem}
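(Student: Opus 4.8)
The plan is to produce the maximizer by the direct method in the uniformly convex space $L^{2j}$, and then to extract the two conclusions from it: the norm identity is essentially built into the definition of $PD^{2j}_{(g,f)}$, while the majorization comes out of a first‑variation argument against a single exponential.

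\emph{Existence.} Set $r=\|g\|_{2j}$. Since $\Phi_f$ is positively homogeneous of degree one and nonnegative on $PD^{2j}$, its supremum over $PD^{2j}_{(g,f)}$ equals its supremum over the convex, bounded, closed set $\mathcal{B}$ of those $G\in PD_f^{2j}$ with $\|G\|_{2j}\le r$. I would take a maximizing sequence in $\mathcal{B}$; by reflexivity of $L^{2j}$ it has a weakly convergent subsequence, and its weak limit $G_0$ again satisfies $\hat G_0\ge 0$, has $\hat G_0$ supported where $\hat f$ is, and has $\|G_0\|_{2j}\le r$. The one delicate point is that $\Phi_f$ is weakly upper semicontinuous on $\mathcal{B}$; it is \emph{not} weakly continuous on $L^{2j}$, because $n\mapsto\abs{\hat f(n)}$ need not be the Fourier transform of an $L^p$ function. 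This I would get from the uniform tail bound
\[
\sup_{G\in\mathcal{B}}\ \sum_{\abs{n}>N}\abs{\hat f(n)}\,\hat G(n)\ \le\ \|f_N\|_p\,r\ \xrightarrow[N\to\infty]{}\ 0 ,
\]
where $f_N:=f-\sum_{\abs{n}\le N}\hat f(n)e^{in\theta}$: the inequality is \eqref{eq:LinearizedUpper} applied to $f_N$ in place of $f$, and $\|f_N\|_p\to 0$ is convergence of Fourier series in $L^p$ for $1<p<\infty$. Since each partial sum $G\mapsto\sum_{\abs{n}\le N}\abs{\hat f(n)}\hat G(n)$ is a finite combination of weakly continuous functionals, this realizes $\Phi_f$ as a uniform limit of weakly continuous functions on $\mathcal{B}$, hence weakly continuous there, so the supremum is attained. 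If $f\not\equiv 0$ the maximizer must lie on the sphere $\|G\|_{2j}=r$ (otherwise rescaling upward would increase $\Phi_f$), and then it lies in $PD^{2j}_{(g,f)}$ and maximizes $\Phi_f$ over that set.

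\emph{Uniqueness and the norm identity.} For uniqueness I would use strict convexity of $L^{2j}$: two distinct maximizers would lie on the sphere of radius $r$, their midpoint would still lie in $PD_f^{2j}$ and still achieve the (positive, if $f\not\equiv 0$) maximal value of $\Phi_f$, yet would have norm strictly less than $r$; rescaling it up to norm $r$ then pushes $\Phi_f$ past the maximum, a contradiction (the case $f\equiv 0$ being trivial). For the norm identity, $\abs{F}=\bigl\lvert(G_0\overline{G_0})^{j-1}G_0\bigr\rvert=\abs{G_0}^{2j-1}$, so with $p=2j/(2j-1)$ one has $\abs{F}^{p}=\abs{G_0}^{2j}$ and hence $\|F\|_p^{p}=\|G_0\|_{2j}^{2j}=r^{2j}$; likewise $\|f\|_p^{p}=\|g\|_{2j}^{2j}=r^{2j}$ by the identities recorded just before the statement. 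Therefore $\|F\|_p=\|f\|_p$.

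\emph{The majorization.} First, $\hat F\ge 0$ everywhere: since $F=G_0^{j}\overline{G_0}^{\,j-1}$, the sequence $\hat F$ is a convolution of $j$ copies of $\hat G_0$ and $j-1$ copies of $n\mapsto\hat G_0(-n)$, all nonnegative, so there is no cancellation — just as in the $p=1$ discussion of the Introduction. For $m$ outside the support of $\hat f$ this already gives $\hat F(m)\ge 0=\abs{\hat f(m)}$. Fix $m$ in the support of $\hat f$ and put $V:=\Phi_f(G_0)$. Homogeneity and maximality of $G_0$ give $\Phi_f(G)\le(V/r)\|G\|_{2j}$ for every $G\in PD_f^{2j}$, while \eqref{eq:LinearizedUpper} gives $V\le\|f\|_p\,r=r^{2j}$, so $V/r^{2j}\le 1$. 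Now test the inequality with $G_\varepsilon:=G_0+\varepsilon\,e^{im\theta}$ for small $\varepsilon>0$, which again lies in $PD_f^{2j}$ because $m$ is in the support of $\hat f$. Exactly, $\Phi_f(G_\varepsilon)=V+\varepsilon\,\abs{\hat f(m)}$; and expanding to first order,
\[
\|G_\varepsilon\|_{2j}^{2j}=\frac1{2\pi}\int_{-\pi}^{\pi}\bigl(\abs{G_0}^2+2\varepsilon\,\Real(G_0 e^{-im\theta})+\varepsilon^2\bigr)^{j}\,d\theta=r^{2j}+2j\,\varepsilon\,\Real\hat F(m)+o(\varepsilon),
\]
since $\frac1{2\pi}\int \abs{G_0}^{2j-2}G_0\,e^{-im\theta}\,d\theta=\hat F(m)$, whence $\|G_\varepsilon\|_{2j}=r+\varepsilon\,\Real\hat F(m)/r^{2j-1}+o(\varepsilon)$. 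Substituting both into $\Phi_f(G_\varepsilon)\le(V/r)\|G_\varepsilon\|_{2j}$, cancelling the leading terms and letting $\varepsilon\to 0^+$ yields $\abs{\hat f(m)}\le(V/r^{2j})\,\Real\hat F(m)\le\Real\hat F(m)=\hat F(m)$, using $V/r^{2j}\le 1$ and $\hat F(m)\ge 0$. Hence $\hat F\ge\abs{\hat f}$.

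\emph{The main obstacle.} I expect the delicate step to be existence. Because $\Phi_f$ is not weakly continuous on $L^{2j}$ — indeed, for $j\ge 2$ the exact $L^2$ majorant of $g$ itself need not lie in $L^{2j}$, which is why the problem calls for a ``variant'' — one cannot simply invoke weak compactness; the uniform tail bound above, which rests on \eqref{eq:LinearizedUpper} applied to $f_N$ together with $L^p$‑convergence of Fourier series, is what makes the direct method go through. Once the maximizer exists, the norm identity is bookkeeping, and the majorization falls out of a one‑parameter variation in a single exponential direction, with the inequality $V\le r^{2j}$ from \eqref{eq:LinearizedUpper} supplying exactly the margin needed to close the estimate.
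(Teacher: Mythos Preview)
Your proof is correct, and the uniqueness, norm identity, and first-variation majorization steps are essentially the paper's: the paper differentiates the quotient $(\|G+tz^n\|_{2j})^{2j}/\Phi_f(G+tz^n)^{2j}$ at $t=0$ and reads off $\hat F(n)\ge\bigl(\|G\|_{2j}^{2j}/\Phi_f(G)\bigr)\,|\hat f(n)|\ge|\hat f(n)|$, which is exactly the computation you carry out in linearized form via $\Phi_f(G_\varepsilon)\le (V/r)\|G_\varepsilon\|_{2j}$.

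The one genuine difference is the existence step. The paper first handles trigonometric polynomials by finite-dimensional compactness, then for general $f$ shows that $\Phi_f$ is \emph{norm}-continuous on $PD^{2j}$ (writing $f$ as an $\ell^1$-sum of trigonometric polynomials and invoking \eqref{eq:LinearizedUpper}) and proves directly, via uniform convexity, that any maximizing sequence in $PD^{2j}_{(g,f)}$ is Cauchy. You instead pass to a weak limit in the ball and show $\Phi_f$ is weakly continuous there by your uniform tail bound. Your route is tidier and avoids the separate polynomial case, but it imports the M.~Riesz theorem on $L^p$ convergence of Fourier partial sums to get $\|f_N\|_p\to 0$; the paper's route needs only density of trigonometric polynomials in $L^p$. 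Both are sound here; the paper remarks in its appendix that the continuity of the analogue of $\Phi_f$ is the obstacle to running this argument on $\R^n$, which is consistent with your own identification of existence as the delicate point.
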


\begin{remark}
\label{rm:JustExact}

When~$j=1$, the two functions~$f$ and~$g$ coincide, as do~$G$ and~$F$,
which
are
their
exact
majorants.
When~$j > 1$,
it may be reasonable to say that the pair~$(G, F)$ specified above is the \emph{exact majorant in~$L^{2j}\times L^{2j/(2j-1)}$} of the pair~$(g, f)$.

\end{remark}

\begin{remark}
\label{Sidon}

As in~\cite{Ba}, the function~$f$ above has a unique
majorant,~$F'$ say,
of minimal~$L^{2j/(2j-1)}$ norm.
That majorant coincides with the
one
in Theorem~\ref{th:Maximizer}
if and only if~$\|F'\|_{2j/(2j-1)} = \|f\|_{2j/(2j-1)}$.
It
is
shown
in~\cite[\S5]{FV}
that those two norms agree
if and only if forming the exact majorant in~$L^2$
of the function~$g$ above does not change its~$L^{2j}$ norm.
\end{remark}

\begin{remark}

These are the cases where the
maximizing
function~$G$
above
is the exact majorant of~$g$ in~$L^2$.
In
particular,~$\hat G = |\hat g|$
if the support of~$\hat g$ is a Sidon~$B_{j}$ set,
that is each number in the sum of~$j$ copies of the set has a unique representation modulo permutation of the~$j$ summands.


\end{remark}

\begin{remark}
\label{rm:Promise}

We
will comment
at end of the next
section on the
cases
where~$\|F'\|_{2j/(2j-1)} < \|f\|_{2j/(2j-1)}$.

\end{remark}

\section{More details}
\label{sec:Details}

\begin
{proof}[Proof of Theorem~\ref{th:Maximizer}.]

Let~$L^{2j}_f$
comprise all functions~$h$ in~$L^{2j}$
whose coefficients vanish
off the support of~$\hat f$.
When~$f$ is a trigonometric polynomial,~$L^{2j}_f$ is finite dimensional, and its subset~$PD^{2j}_{(g,f)}$ is compact.
Now~$\Phi_f$ is obviously continuous on~$PD^{2j}_{(g,f)}$ in this case; so there is a function~$G$ in that set at which~$\Phi_f(G)$ is maximal.
The existence
of such a maximizer~$G$
for other functions~$f$ will be established at the end of this proof.

Let~$M = \Phi_f(G)$.
If~$\Phi_f(H) = M$ for
another function~$H$ 
in~$PD^{2j}_{(g, f)}$,
let~$G' = (G+H)/2$.  Then~$\Phi_f(G') = M$
too,
but~$\|G'\|_{2j} < \|g\|_{2j}$
by the strict convexity of the~$L^{2j}$ norm.
Rescaling~$G'$
to have the same~$L^{2j}$ norm as~$g$
would then give a function in~$PD^{2j}(g,f)$ where~$\Phi_f$ takes a value larger than~$M$, contrary to the maximality of~$M$.
So~$G$ is unique.

As noted earlier,~$\|f\|_{2j/(2j-1)}= \|g\|_{2j}^{2j-1}$.
The corresponding statement about~$F$ and~$G$ holds for the same reasons.
It then follows
that~$\|F\|_{2j/(2j-1)} = \|f\|_{2j(2j-1)}$
because~$\|G\|_{2j} = \|g\|_{2j}$.


By
inequality~\eqref{eq:LinearizedUpper}
and the fact that~$\|F\|_{2j/(2j-1)}= \|G\|_{2j}^{2j-1}$,
\begin{equation}
\label{eq:BoundPhif}
\Phi_f(G) \le (\|G\|_{2j})^{2j}.
\end{equation}
Given an integer~$n$ in the support of~$\hat f$, let~$z^n$ be the function mapping~$\theta$ to~$e^{in\theta}$.
Another useful property of~$(\|\cdot\|_{2j})^{2j}$
is that, for a real parameter~$t$,
\begin{equation}
\label{eq:NormDerivative}
\left[\frac{d}{dt}\left(\|G + tz^n\|_{2j}\right)^{2j}\right]_{t=0}
= 2j\hat F(n).
\end{equation}
As in~\cite{HaL}, this follows from  differentiation inside an integral sign, with~$2j$ replaced by any exponent in the interval~$(1, \infty)$.
For the~$L^{2j}$ norm,
however, it is enough to expand~$(G + tz^n)^j(\overline{G + tz^n})^j$
as a polynomial in powers of~$t$, integrate each term with respect to~$\theta$, and examine the resulting coefficient of~$t$.

The conclusions of the theorem are trivial when~$\|f\|_{2j/(2j-1)} = 0$.
In the remaining cases,
as~$H$ runs through~$PD^{2j}(g,f)$,
the~$L^{2j}$ norm of~$H$ is constant,
and~$\Phi_f(H) \ne 0$.
So the fraction~$[(\|H\|_{2j})/\Phi_f(H)]^{2j}$ is minimized when~$H = G$.
That fraction extends
to the nontrivial part of~$PD^{2j}_f$,
that is
to
the union
of rays of the form
\[
\{sH: s > 0, H \in~PD^{2j}(g,f)\}.
\]
The fraction
is constant on each such ray, and minimal on the one where~$H = G$.
Form the quotient
\[
Q_n(t) := \frac{(\|G + tz^n\|_{2j})^{2j}}{\Phi_f(G + tz^n)^{2j}}
\]
when~$t \ge 0$.
The
derivative
of~$Q_n(t)$
at~$t=0$
exists and is
equal to
\[
\frac{2j\Phi_f(G)^{2j-1}}{\Phi_f(G)^{4j}}
\left[\Phi_f(G)\hat F(n) - \left(\|G\|_{2j}\right)^{2j}
\left|\hat f(n)\right|\right].
\]
Since~$Q_n(t)$ is minimal at~$t=0$, this derivative is nonnegative. So
\begin{equation}
\label{eq:OverMajorize}
\hat F(n) \ge
\frac{\left(\|G\|_{2j}\right)^{2j}}
{\Phi_f(G)}\left|\hat f(n)\right| \ge \left|\hat f(n)\right|,
\end{equation}
by inequality~\eqref{eq:BoundPhif}.

Now let~$f$ be any function in~$L^{2j/(2j-1)}$.
By the density of trigonometric polynomials in~$L^{2j/(2j-1)}$, one can write~$
f$ as an infinite sum of trigonometric polynomials~$f_k$, where
\[
\sum_k\|f_k\|_{2j/(2j-1)} \le (1 + \varepsilon)\|f\|_{2j/(2j-1)}.
\]
By inequality~\eqref{eq:LinearizedUpper},
the series~$\sum_k\Phi_{f_k}$
converges uniformly
to~$\Phi_f$
on bounded
subsets of~$PD^{2j}$.
It follows that~$\Phi_f$ is continuous
on~$PD^{2j}$
and that~$\Phi_f(H) \le \|f\|_{2j/(2j-1)}\|H\|_{2j}$
for all~$H$ in~$PD^{2j}$.

Since the function~$\Phi_f$ is bounded on the set~$PD^{2j}_{(g, f)}$,
there is 
a sequence~$(G_k)$
in~$PD^{2j}_{(g, f)}$
for
which~$\Phi_f(G_k)$ converges to the supremum of the values of~$\Phi_f$ in that set.
If~$(G_k)$ is a Cauchy sequence, then by
the continuity of~$\Phi_f$, that supremum is attained at~$G = \lim_{k\to\infty}G_k$.

By the uniform convexity of
the~$L^{2j}$ norm,
for each positive number~$\varepsilon$
there is a number~$R$ in the interval~$[0, 1)$
with the following property.
If~$H$ and~$K$ both belong to~$PD^{2j}(g, f)$, and if~$\|(H+K)/2\|_{2j} > R\|g\|_{2j}$,
then~$\|H-K\|_{2j} < \varepsilon$.
This reduces the task of proving that~$(G_k)$ is a Cauchy sequence to checking
when~$R \in [0, 1)$
that if~$k$ and~$k'$ are large enough, then~$\|(G_k + G_{k'})/2\|_{2j} > R\|g\|_{2j}$.

Let~$M$ be
the supremum of~$\Phi_f(H)$ as~$H$ runs through~$PD^{2j}_{(g, f)}$.
When~$k$ and~$k'$ are large enough,~$\Phi_f(G_k) > RM$,
and~$\Phi_f(G_{k'}) > RM$.
It follows that~$\Phi_f((G_k + G_{k'})/2) > RM$ too.  Rename~$(G_k + G_{k'})/2$ as~$G'$,
and let
\[
H = \frac{\|g\|_{2j}}{\|G'\|_{2j}}G'.
\]
This rescaling puts~$H$ in~$PD^{2j}_{(g, f)}$,
so that~$\Phi_f(H) \le M$.
On the other
hand,
\[
\Phi_f(H) = \frac{\|g\|_{2j}\Phi_f(G')}{\|G'\|_{2j}}.
\]
Therefore,
\[
RM < \Phi_f(G') = \frac{\|G'\|_{2j}}
{\|g\|_{2j}}\Phi_f(H)
\le \frac{\|G'\|_{2j}}{\|g\|_{2j}}M,
\]
and~$\|G'\|_{2j} > R\|g\|_{2j}$
as required.
\end{proof}

\begin{remark}
\label{rm:Minimizing}
Another way to describe~$G$
is that letting~$H=G$ minimizes~$\|H\|_{2j}$
in the part of~$PD^{2j}_f$ where~$\Phi_f(H) = M$.
In Appendix~\ref{Continuous} below,
this approach is
used
in~$L^{2j}(\R^n)$.

\end{remark}

\begin{remark}
\label{rm:Reciprocal}
Putting~$H = G/M$
minimizes~$\Phi_f(H)$ 
subject to the constraint that~$\Phi_f(H) = 1$.
It is shown in~\cite[\S3]{FV}
that~$\|G/M\|_{2j}$ must then be the reciprocal of the norm of the minimal
majorant,~$F'$ say,
of~$f$ in~$L^{2j/(2j-1)}$.
On the other
hand,
\begin{equation}
\label{eq:NormRatio}
\frac{\|F\|_{2j/(2j-1)}}{\|F'\|_{2j/(2j-1)}}
= \frac{(\|G\|_{2j})^{2j-1}}{M/\|G\|_{2j}}
= \frac{(\|G\|_{2j})^{2j}}{M}
=
\frac{\left(\|G\|_{2j}\right)^{2j}}
{\Phi_f(G)}.
\end{equation}
Denote the last fraction
above
by~$r$, and rewrite the first inequality in line~\eqref{eq:OverMajorize} as
the statement that~$\hat F \ge r|\hat f|$.
Then~$F/r$
also majorizes~$f$,
while~$\|F/r\|_{2j/(2j-1)} = \|F'\|_{2j/(2j-1)}$ by equation~\eqref{eq:NormRatio}.
The uniqueness of the minimal majorant
yields that~$F/r = F'$ and~$F = rF'$.
\end{remark}

\begin{remark}
\label{rm:NotMinimal}
When~$r > 1$, the majorant~$F$ does not have minimal norm in~$L^{2j/(2j-1)}$, and~$f$ has other majorants with the same norm as~$f$. 
Indeed, for
any nontrivial function,~$H$ say, in~$L^{2j/(2j-1)}$ with nonnegative coefficients, adding a suitably rescaled copy of~$H$ to~$F'$ gives a majorant for~$f$ with the same norm as~$f$.
That majorant is not
a rescaled copy of~$F'$ unless~$H$ is.

\end{remark}

\begin{remark}
\label{rm:LargerSupports}

In the description of~$G$ above, it is not necessary to insist a priori that~$\hat G$ vanish off the support of~$\hat f$.
Consider minimizing~$\|G\|_{2j}$ with the constraints that~$\hat G \ge 0$ and~$\Phi_f(G) = M$,
for instance.
If
the
solution
had some strictly positive coefficient outside the support of~$\hat f$, then replacing that coefficient with~$0$ would decrease
the
norm of the solution
in~$L^{2j}$ without changing the value of~$\Phi_f$.

\end{remark}

\section{Convolution on the integers}
\label{sec:ConvoFacts}


We reformulate
part of
Theorem~\ref{th:Maximizer}. 

\begin{theorem}
\label{th:powersofg}
Let~$j$ be a positive integer.
For each function~$g$ in~$L^{2j}$,
there is another function~$G$ in~$L^{2j}$
with the following 
properties.
\begin{enumerate}
\item\label{en:positiveg}  
$\hat G \ge 0$.

\item
\label{en:supportg}  
$\hat G = 0$ off the support
of the Fourier coefficients of~$(g\overline{g})^{j-1}g$.

\item
\label{en:dualnormg} 
$\|G\|_{2j} = \|g\|_{2j}$.

\item
\label{en:majorizeg} 
$(G\overline G)^{j-1}G$
majorizes~$(g\overline{g})^{j-1}g$.



\end{enumerate}
\end{theorem}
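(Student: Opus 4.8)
The plan is to read Theorem~\ref{th:powersofg} as Theorem~\ref{th:Maximizer} with the two functions interchanged. Theorem~\ref{th:Maximizer} starts from a function~$f$ in~$L^{2j/(2j-1)}$ and passes to its companion~$g = |f|^{p-1}\sgn(f)$ in~$L^{2j}$; here we are handed~$g$ in~$L^{2j}$, and the task is to recover the~$f$ to which that theorem applies.

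So, given~$g$ in~$L^{2j}$, I would set~$f := (g\overline g)^{j-1}g = |g|^{2j-2}g$. Since~$|f| = |g|^{2j-1}$ pointwise, we get~$|f|^{2j/(2j-1)} = |g|^{2j}$, so~$f$ lies in~$L^{2j/(2j-1)}$, its Fourier coefficients are defined, and~$\|f\|_{2j/(2j-1)} = (\|g\|_{2j})^{2j-1}$. Next I would check that~$g$ is exactly the function that Section~\ref{sec:Maximizers} attaches to this~$f$: with~$p = 2j/(2j-1)$ one has~$p - 1 = 1/(2j-1)$, while~$f$ and~$g$ share the same argument wherever they are nonzero and both vanish elsewhere, so~$|f|^{p-1}\sgn(f) = |g|\,\sgn(g) = g$. (If~$g \equiv 0$ then~$f \equiv 0$ and every conclusion holds with~$G \equiv 0$; when~$j=1$ this is just the passage to exact majorants of Remark~\ref{rm:JustExact}.)

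With~$f$ so chosen, I would invoke Theorem~\ref{th:Maximizer}. It supplies~$G$ in~$PD^{2j}_{(g, f)}$ for which~$F := (G\overline G)^{j-1}G$ majorizes~$f$ (and, though this is not needed, has the same~$L^{2j/(2j-1)}$ norm as~$f$). Unwinding the membership~$G \in PD^{2j}_{(g, f)}$ yields the first three assertions: $\hat G \ge 0$ is membership in~$PD^{2j}$; the vanishing of~$\hat G$ off the support of~$\hat f$ is membership in~$PD^{2j}_f$, and that support is precisely the support of the coefficients of~$(g\overline g)^{j-1}g$; and~$\|G\|_{2j} = \|g\|_{2j}$ is the defining property of the subset~$PD^{2j}_{(g, f)}$. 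The remaining assertion is the statement that~$F = (G\overline G)^{j-1}G$ majorizes~$f = (g\overline g)^{j-1}g$, which is the majorization conclusion of Theorem~\ref{th:Maximizer} verbatim. I expect no real obstacle here; the content is bookkeeping, and the one point that wants a line of care is the identity~$g = |f|^{p-1}\sgn(f)$, i.e.\ that for the special exponent~$p = 2j/(2j-1)$ the duality map~$f \mapsto |f|^{p-1}\sgn(f)$ and the map~$g \mapsto (g\overline g)^{j-1}g$ are mutually inverse, together with the harmless degenerate case~$g \equiv 0$.
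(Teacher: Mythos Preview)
Your proposal is correct and is essentially the paper's approach: the paper introduces Theorem~\ref{th:powersofg} with the sentence ``We reformulate part of Theorem~\ref{th:Maximizer}'' and gives no further argument, and you have simply spelled out that reformulation---defining $f:=(g\overline g)^{j-1}g$, checking via the computations already in Section~\ref{sec:Maximizers} that this $f$ has the given $g$ as its dual companion, and reading off the four conclusions from the membership $G\in PD^{2j}_{(g,f)}$ together with the majorization in Theorem~\ref{th:Maximizer}.
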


In particular, this holds when~$g$
is a trigonometric polynomial, and then~$G$
is a trigonometric polynomial too.
Use the term {\em light version}
of Theorem~\ref{th:Maximizer}
for 
that
case.
%
Easy approximation
arguments 
using
this
version of the theorem
yield
that every function in~$L^{2j/(2j-1)}$
has a majorant with no larger~$L^{2j/(2j-1)}$ norm.

Given a trigonometric polynomial~$g$, let~$a = \hat g$,
and let~$\tilde a$ be the sequence
with~$\tilde a(n) = \overline{a(-n)}$ for all~$n$;
these are the
coefficients of~$\overline g$.
Fix an integer~$j > 1$, and let~$(\tilde a*a)^{*j}$ denote
the~$j$-th convolution power of the convolution
product~$\tilde a*a$. 
The
Fourier coefficients
of~$|g|^{2j}$ 
are given
by the sequence~$(\tilde a*a)^{*j}$,
while those
of~$g^j(\overline g)^{j-1}$
are given by~$(a*\tilde a)^{*(j-1)}*a$.

Since~$|g|^{2j} \ge 0$,
its integral with respect to the measure~$d\theta/2\pi$
is equal to its~$0$-th Fourier coefficient,
that is to the value~$[(\tilde a*a)^{*j}](0)$
of the sequence~$(\tilde a*a)^{*j}$
at the index~$0$. Similar comments apply
to the trigonometric polynomial~$G$, with coefficients~$b$ say.
So
the
light version of Theorem~\ref{th:Maximizer}
is equivalent to
the following
statement
about convolution
on the integers.

\begin{theorem}
\label{th:light}
Let~$j$ be a positive integer.
Given a
finitely-supported
function~$a$
on the integers,
let~$c = a*(\tilde a*a)^{*(j-1)}$. Then
there
is 
a
function~$b$ on the integers with the following 
properties.
\begin{enumerate}
\item
\label{en:positivelight}  
$b \ge 0$.

\item
\label{en:supportlight}  
$b$ vanishes off the support
of~$c$. 

\item
\label{en:dualnormlight} 
$[(\tilde b*b)^{*j}](0)
= [(\tilde a*a)^{*j}](0)$.

\item
\label{en:majorizelight} 
$(b*\tilde b)^{*(j-1)}*b
\ge |c|$.

\end{enumerate}
\end{theorem}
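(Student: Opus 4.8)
The plan is to derive Theorem~\ref{th:light} from the \emph{light version} of Theorem~\ref{th:Maximizer} (equivalently, from Theorem~\ref{th:powersofg} restricted to trigonometric polynomials), using nothing beyond the Fourier dictionary set up in this section. Given a finitely-supported function $a$ on the integers, I would let $g = \sum_n a(n) z^n$, a trigonometric polynomial with $\hat g = a$. Then $\overline g$ has coefficient sequence $\tilde a$, and the identities recorded just before the statement show that $f := (g\overline g)^{j-1}g = g^j(\overline g)^{j-1}$ has Fourier coefficient sequence precisely $a*(\tilde a*a)^{*(j-1)}$, which is the sequence $c$ of the theorem; in particular $\supp c = \supp \hat f$.

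Next I would apply the light version of Theorem~\ref{th:Maximizer} to this $g$. Because $g$ is a trigonometric polynomial, the resulting function $G$ is itself a trigonometric polynomial, and it satisfies $\hat G \ge 0$, $\hat G = 0$ off $\supp c$, $\|G\|_{2j} = \|g\|_{2j}$, and the majorization of $f$ by $(G\overline G)^{j-1}G$. Setting $b = \hat G$, properties~\eqref{en:positivelight} and~\eqref{en:supportlight} are immediate. For property~\eqref{en:dualnormlight}, I would note that $|G|^{2j} = (G\overline G)^j$ is a nonnegative trigonometric polynomial with coefficient sequence $(\tilde b*b)^{*j}$, so its integral $\frac{1}{2\pi}\int_{-\pi}^{\pi}|G(\theta)|^{2j}\,d\theta = (\|G\|_{2j})^{2j}$ equals $[(\tilde b*b)^{*j}](0)$; the same computation for $g$ gives $[(\tilde a*a)^{*j}](0) = (\|g\|_{2j})^{2j}$, and the two agree since $\|G\|_{2j} = \|g\|_{2j}$. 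For property~\eqref{en:majorizelight}, the polynomial $(G\overline G)^{j-1}G$ has coefficient sequence $(b*\tilde b)^{*(j-1)}*b$, and the statement that it majorizes $f$ says exactly that this sequence dominates $|c|$ term by term. The degenerate case $a = 0$, with $b = 0$, is covered automatically.

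I would also record the reverse implication, since the two theorems are to be equivalent: given $b$ satisfying \eqref{en:positivelight}--\eqref{en:majorizelight}, the trigonometric polynomial $G = \sum_n b(n) z^n$ satisfies the four conclusions of Theorem~\ref{th:powersofg} for $g$, by reading the same dictionary in reverse. I do not expect a substantial obstacle here: the analytic work was all done in proving Theorem~\ref{th:Maximizer}, and what remains is bookkeeping. The one point that genuinely wants care is the tracking of complex conjugates and of the order of the convolution factors --- verifying that $c = a*(\tilde a*a)^{*(j-1)}$ is literally $\hat f$ rather than a near-variant of it, that $|c|$ is the termwise modulus of $\hat f$, and that ``$b$ vanishes off $\supp c$'' is word for word the support hypothesis on $\hat G$ supplied by Theorem~\ref{th:powersofg}. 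Once those identifications are pinned down, the four conclusions fall out one line at a time.
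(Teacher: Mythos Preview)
Your proposal is correct and matches the paper's approach exactly: the paper proves Theorem~\ref{th:light} in one line by saying it follows from Theorem~\ref{th:powersofg}, relying on the Fourier dictionary set up in the preceding paragraphs, which is precisely the translation you spell out. The paper also remarks that one could argue directly by maximizing $\sum_n b(n)|c(n)|$ under constraints \eqref{en:positivelight}--\eqref{en:dualnormlight}, but this is only mentioned as an alternative, not carried out.
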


This
follows from Theorem~\ref{th:powersofg},
and can also be proved directly by choosing
the function~$b$ to maximize
the sum~$\sum_n b(n)|c(n)|$ subject to the first three conditions enumerated above.
\begin{remark}
\label{rm:Counterparts}
The
convolution version
of equation~\eqref{eq:NormDerivative}
runs as follows.
Let~$\delta_n$ be the sequence
that takes
value~$1$ at~$n$, and that vanishes otherwise.  Given a real number~$t$,
let~$d = b + t\delta_n$,
and form~$[(\tilde d*d)^{*j}](0)$.
Then
the
derivative at~$t=0$
of the latter
is~$2j[(b*\tilde b)^{*(j-1)}*b](n)$.
It
is easy to check
this
by
expanding~$[(\tilde d*d)^{*j}](0)$
in powers of~$t$.
\end{remark}

\begin{remark}
\label{rm:ConvolutionHoelder}

In this context,
inequality~\eqref{eq:Parseval}
is only required when~$g$ is a trigonometric polynomial, and~$f$ is a suitable product of such polynomials.
The corresponding statement for convolution
is that
if~$a$ and~$k$ are
finitely supported
functions
on the
integers, then
\[
\left|\left[\tilde k*
(a*\tilde a)^{*(j-1)}*a
\right](0)\right| \le
[(\tilde k*k)^{*j}(0)
]^{1/2j}
\left[(\tilde a*a)^{*j}(0)\right]^{(2j-1)/2j}.
\]
This
follows from H\"older's inequality on the unit 
circle.
It
can
also
be verified
in the style of the proof of Theorem~\ref{th:Maximizer}.
Just rescale
in nontrivial cases
to make~$(\tilde a*a)^{*j}(0)$
equal to~$1$,
and then
minimize
\[
\frac{(\tilde k*k)^{*j}(0)}
{
\left|\left[\tilde k*
(a*\tilde a)^{*(j-1)}*a
\right](0)\right|^{2j}}.
\]
The fact that positive bounded operators on Hilbert spaces have unique positive~$j$-th roots is useful here.

\end{remark}

\begin{remark}
\label{eq:AbelianOrNot}

Theorem~\ref{th:light} and the two remarks above
extend to all discrete groups, abelian or not.
H\"older's inequality
extends to
noncommutative~$L^p$ spaces
as in~\cite{Xu}.
The
counterpart of the
instance in Remark~\ref{rm:ConvolutionHoelder}
can also be proved by the method outlined there.

\end{remark}

\begin{remark}
\label{rm:Traces}

Similar comments apply in the context of trace ideals \cite{Sim}.
\end{remark}

\begin{appendix}
\section{Majorization
on 
nondiscrete 
duals
}
\label{Continuous}

We now explain how
some of
our methods extend to the spaces~$L^p(\R^n)$,
where comparisons between transforms are made on a dual copy
of~$\R^n$, which is not discrete.
When~$1 < p \le 2$, transforms of~$L^p$ functions
on~$\R^n$
can be identified with (equivalence classes of)
functions on the dual copy.
When~$p' > 2$,
functions in~$L^{p'}(\R^n)$ have transforms in the sense
of tempered distributions, but
in many cases
those transforms
are not functions.

Duality arguments in~\cite{Ra, LS}
used summability
on
the group~$\R^n$ and its dual copy
to reduce the study of 
majorant properties 
in~$L^p$ spaces
to
instances
where~$f$ and~$\hat f$
are both functions.
Again~\cite{LS, Ra}
the upper majorant property only holds when~$p$ is infinite or even, and
the lower majorant property only holds
when~$p = 1$
or~$p'$ is an even integer.
Here, we offer a different proof that the upper majorant property implies the lower majorant property for the dual
exponent when~$p$ is even;
again,
we get
more information about the forms of
some majorants.

Recall 
that a 
distribution is said to be 
\emph{nonnegative}
if it maps each nonnegative test function
to a nonnegative number, and then the distribution
acts by integration against a nonnegative Borel measure.
When the distributions acts by integration
against a function, the distribution
is nonnegative if and only if 
that
function is nonnegative almost everywhere.

When two functions~$f$ and~$F$ both belong to~$L^p$, where~$1 \le p \le 2$, call~$F$ a 
\emph{majorant}
of~$f$ if~$\hat F \ge |\hat f|$
almost everywhere
on the dual copy of~$\R^n$.
More generally,
for two distributions~$\Phi$ and~$\Psi$, call~$\Phi$ a majorant of~$\Psi$ if the transform~$\hat\Phi$
is represented by a 
nonnegative
measure,
while~$\hat\Psi$ is
represented by a
measure with the property
that~$|\hat\Psi(S)| \le \hat\Phi(S)$
for all Borel sets~$S$.

If~$p$ is even,
then the upper majorant property holds with constant~$1$.  That is,
if~$\Phi \in L^p$,
then~$\Psi \in L^p$ and~$\|\Psi\|_p \le \|\Phi\|_p$. 
The only cases of this
needed here are those
where~$\hat \Phi$ and~$\hat \Psi$
are
represented by functions
in~$L^1\cap L^\infty$.
Then those transforms also belong to~$L^2$,
and it follows that~$\Phi$ and~$\Psi$ belong to~$L^2\cap L^\infty$.
One can then compare the~$L^{2j}$ norms of~$\Phi$ and~$\Psi$ by considering the~$j$-th convolution powers of~$\hat\Phi$ and~$\hat\Psi$ and comparing their~$L^2$ norms.

Also recall that a distribution
is said to vanish
on an open set if the distribution annihilates
every test function whose support is a compact subset
of the open set. 
Then
there is 
a largest such open set, and 
the support
of the distribution is defined to be the complement
of that largest open set.
When
the distribution acts by integration
against a function,
that
support 
is the smallest closed set outside which
the function takes the value~$0$
almost everywhere.

\begin{theorem}\label{real-th}
Let~$p = 2j/(2j-1)$ for some integer~$j>1$, 
let~$f$ be a function in~$L^p(\R^n)$,
and let~$S$ be the support of
the distribution~$\hat f$.
Then~$f$ has a majorant of the form~$G^j(\overline{G})^{j-1}$,
where~$G \in L^{p'}(\R^n)$,
and~$\hat G$ is a nonnegative distribution
whose support is included 
in~$S$.
The majorant of minimal~$L^p$ norm has this form,
and~$\|G\|_{p'} \le (\|f\|_p)^{1/(2j-1)}$ in that case.
The
support of the transform of the minimal majorant
is included in the closure of
the algebraic sum of~$j$ copies of~$S$
and~$j-1$ copies of~$-S$.
\end{theorem}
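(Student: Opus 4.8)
The plan is to transport the variational construction of Theorem~\ref{th:Maximizer}, in the minimizing form of Remark~\ref{rm:Minimizing} and with the support reduction of Remark~\ref{rm:LargerSupports}, to $L^{p'}(\R^n)=L^{2j}(\R^n)$; the only genuinely new feature is that the dual group is not discrete. Following the summability arguments of \cite{Ra, LS} recalled before the statement, I would first reduce to the case where $\hat f$ is represented by a function in $L^1\cap L^\infty$ with compact support, so that $f\in L^2\cap L^\infty$. The point to watch here is that the truncations applied to $\hat f$ must not enlarge its support, so that the data stay carried by $S$ and the final support statement is not lost in the limit; this is precisely where non-discreteness of the dual makes the passage to general $f$ less routine than on $\T$. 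In the reduced situation let $PD^{p'}_S$ be the convex cone of $G\in L^{p'}(\R^n)$ whose transform $\hat G$ is a nonnegative measure carried by $S$, and put $\Phi_f(G)=\int\abs{\hat f}\,d\hat G$. The continuous form of inequality~\eqref{eq:LinearizedUpper}, $\Phi_f(G)\le\|f\|_p\|G\|_{p'}$ on $PD^{p'}_S$, comes from writing $\abs{\hat f}\,d\hat G=\hat f\,\overline{\sgn(\hat f)}\,d\hat G$, recognizing the distribution $h$ with $d\hat h=\sgn(\hat f)\,d\hat G$ as one majorized by $G$, and then invoking the upper majorant property for the even exponent $p'$ together with H\"older's inequality on $\R^n$.

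With this in place I would minimize $\|G\|_{p'}$ over the $G\in PD^{p'}_S$ with $\Phi_f(G)=1$; a rescaling converts the minimizer into the maximizer of $\Phi_f$ on the sphere $\|G\|_{p'}=(\|f\|_p)^{1/(2j-1)}$, which is the direct analogue of Theorem~\ref{th:Maximizer}. \textbf{Existence of the minimizer is the step I expect to be the main obstacle}, and for a reason special to the non-discrete dual: for $G\in L^{2j}(\R^n)$ with $2j>2$ the transform $\hat G$ is a priori only a tempered distribution, so asking it to be a nonnegative measure supported in $S$ is a real constraint, and along a minimizing sequence one must exclude mass of $\hat G_k$ escaping to infinity. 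When $\hat f$ is compactly supported this is manageable: a Nikol'skii-type inequality for functions band-limited to the fixed compact set $S$ bounds the total mass $\hat G_k(S)$ by a constant times $\|G_k\|_{p'}$, so a minimizing sequence consists of uniformly bounded nonnegative measures on a fixed compact set; a weak-$*$ limit $\hat G_0$ retains $\hat G_0\ge 0$ and $\supp\hat G_0\subseteq S$, Fourier inversion plus Fatou give $\|G_0\|_{p'}\le\liminf\|G_k\|_{p'}$, and $\Phi_f(G_0)=1$ because $\hat f$ is continuous on $S$. Uniqueness of $G_0$ then follows, exactly as in the proof of Theorem~\ref{th:Maximizer}, from strict convexity of the $L^{2j}$ norm and the fact that $\Phi_f$ is affine in $\hat G$. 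The constraint $\supp\hat G_0\subseteq S$ is free by the reasoning of Remark~\ref{rm:LargerSupports}: since $\|G\|_{p'}^{2j}=\|G^j\|_2^2=\|(\hat G)^{*j}\|_2^2$ is monotone in the nonnegative measure $\hat G$, any mass of $\hat G_0$ outside $S$ could be discarded, strictly decreasing $\|G_0\|_{p'}$ without changing $\Phi_f$.

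The three conclusions then follow as on the circle. Because $2j$ is an even integer, $t\mapsto\|G_0+t\psi\|_{p'}^{2j}$ is for each $\psi\in PD^{p'}_S$ an honest polynomial in $t\ge 0$ whose coefficient of $t$ is $2j\int\hat F\,d\hat\psi$, with $F:=(G_0\overline{G_0})^{j-1}G_0$ — the $\R^n$ version of \eqref{eq:NormDerivative} — while $\Phi_f(G_0+t\psi)=1+t\,\Phi_f(\psi)$. The scale-invariant ratio $Q_\psi(t)=\|G_0+t\psi\|_{p'}^{2j}/\Phi_f(G_0+t\psi)^{2j}$ is minimal over the cone at $G_0$, so $Q_\psi'(0)\ge 0$ rearranges to $\int\hat F\,d\hat\psi\ge r\int\abs{\hat f}\,d\hat\psi$ with $r=\|G_0\|_{p'}^{2j}/\Phi_f(G_0)$; letting $\hat\psi$ run over $\mathbf 1_E\,d\xi$ for $E\subseteq S$ of finite measure gives $\hat F\ge r\abs{\hat f}$ a.e., and $r\ge 1$ by the linearized upper majorant inequality together with $\|G_0\|_{p'}=(\|f\|_p)^{1/(2j-1)}$. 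Hence $F$ majorizes $f$ and $\|F\|_p=\|G_0\|_{p'}^{2j-1}=\|f\|_p$, which is the first assertion (with $G$ a rescaling of $G_0$). Next, $F/r$ also majorizes $f$, and pairing an arbitrary majorant $\Psi$ of $f$ against $G_0$ gives $\Phi_f(G_0)\le\int\hat\Psi\,d\hat G_0=\langle\Psi,G_0\rangle\le\|\Psi\|_p\|G_0\|_{p'}$, i.e. $\|\Psi\|_p\ge\Phi_f(G_0)/\|G_0\|_{p'}=\|F/r\|_p$; since the set of majorants is closed and convex in the uniformly convex space $L^p$, it has a unique element $F'$ of least norm, and the displayed inequality forces $F'=F/r=(G')^j(\overline{G'})^{j-1}$ with $G'=r^{-1/(2j-1)}G_0$. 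Then $\widehat{G'}$ is a nonnegative measure carried by $S$, $\|G'\|_{p'}=(\|f\|_p/r)^{1/(2j-1)}\le(\|f\|_p)^{1/(2j-1)}$ because $r\ge 1$, and $\widehat{F'}=(\widehat{G'})^{*j}*(\widehat{\overline{G'}})^{*(j-1)}$ is supported in $\overline{jS+(j-1)(-S)}$ since $\supp\widehat{\overline{G'}}=-\supp\widehat{G'}\subseteq -S$ and the support of a convolution lies in the closure of the sum of the supports. The only work beyond transcribing the $\T$-argument is the compactness input for the minimizer and, upstream of it, arranging the reduction to compactly supported $\hat f$ without spoiling the support of the eventual majorant.
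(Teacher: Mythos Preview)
Your variational scheme is the natural transplant of Theorem~\ref{th:Maximizer}, but the existence step contains a real gap that the paper itself flags. After reducing to $\hat f\in L^1\cap L^\infty$ with compact support you pass to a weak-$*$ limit of the measures $\hat G_k$ and assert that ``$\Phi_f(G_0)=1$ because $\hat f$ is continuous on $S$.'' Nothing in your reduction forces $\hat f$ to be continuous, and without continuity the constraint $\int|\hat f|\,d\hat G_k=1$ need not survive a weak-$*$ limit of measures. You cannot repair this by smoothing $\hat f$ without risking enlargement of its support, the very danger you warned about. Remark~\ref{rm:MissingContinuity} records exactly this obstruction: the author states that proving existence by the method of Theorem~\ref{th:Maximizer} ``seems harder here, because it is not clear that a suitable counterpart of the function~$\Phi_f$ is continuous.''

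The paper therefore takes a genuinely different route and never reduces to compactly supported $\hat f$. With $c=\hat f\in L^{p'}$ it introduces the convex set $R(c)$ of $w$ whose transforms are nonnegative \emph{bounded functions with bounded support} satisfying $\int|c|\hat w=1$, and obtains the minimizer $h$ of $\|\cdot\|_{p'}$ in the $L^{p'}$-closure of $R(c)$ directly from uniform convexity---no compactness of measures, no Nikol'skii bound, no continuity of $\Phi_f$. A one-sided derivative test (Lemma~\ref{continuousnorm}) then shows that $k=h|h|^{p'-2}$ is the minimal \emph{partial} majorant of $f$; for $p'=2j$ this is $h^j\bar h^{j-1}$. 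Finally $h$ is approximated in $L^{p'}$ by $h_n\in R(c)$, so that $k_n=h_n^j\bar h_n^{j-1}\to k$ in $L^p$; since each $\widehat{k_n}$ is a convolution of nonnegative functions supported in $S$ and $-S$, one reads off both $\hat k\ge 0$ (upgrading partial to full majorant) and the support conclusion. The upper majorant property enters only to show $K_p(\hat f)>0$. What the paper buys is existence for free from uniform convexity; what your approach would buy, if the continuity issue could be closed, is a closer parallel with the circle argument.
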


\begin{proof}
Modify
the
approach in Remark~\ref{rm:Minimizing},
replacing sums with integrals,
and replacing pointwise
inequalities 
with ones
that hold almost everywhere.
For now, only require that~$1 < p < 2$.

Given a
measurable
function~$c$ in~$L^{p'}$ on the dual copy of~$\R^n$,
let~$R(c)$ be the set of 
distributions~$w$
for which~$\hat w$ can be identified with a nonnegative, bounded, measurable function with bounded support on the dual copy of~$\R^n$,
and 
for which
\begin{equation}
\label{eq:Integral}
\int_{\R^n} |c|\hat w = 1.
\end{equation}
%
Then~$R(c)$ is nonempty
when~$c$ is nontrivial,
because
there
are
bounded
sets
of positive measure on which the values of~$|c|$ are bounded away from~$0$ and~$\infty$.
%
Suitably rescaling the indicator function of such a set gives a function~$\hat w$ satisfying condition~\eqref{eq:Integral}.

The inverse transform
of that function
belongs
to both~$L^\infty(\R^n)$ and~$L^2(\R^n)$, and  hence
to~$L^{p'}(\R^n)$.
Let~$K_p(c)$
be the infimum of~$L^{p'}$ norms of members
of the nonempty convex set~$R(c)$.
Since the~$L^{p'}$ norm on is uniformly convex, the closure of~$R(c)$ in~$L^{p'}$ has a unique element of smallest norm, which must be~$K_p(c)$.

Say that a function~$F$ in~$L^p(\R^n)$
is a {\em partial majorant} of~$\check c$
if~$\hat F \ge |c|$ almost everywhere in the set where~$c \ne 0$.  The set of partial majorants is 
convex, and it is closed in~$L^{p}$.
If this set is nonempty, then it has a unique element of minimal~$L^{p}$ norm, by uniform convexity again.

\begin{lemma}\label{continuousnorm}
Let~$1 < p < 2$, and 
let~$c \in L^{p'}(\R^n)$. If~$\|c\|_{p'} \ne 0$, 
then~$\check c$ has a partial majorant
in~$L^p$ if and only if~$K_p(c) > 0$.
In that case, the minimal~$L^p$ norm of partial majorants
of~$\check c$ is equal to~$1/K_p(c)$.
The partial majorant
of minimal norm is a rescaled copy
of~$h|h|^{p'-2}$,
where~$h$ has
minimal~$L^{p'}$ norm
in the closure
of~$R(c)$
in~$L^{p'}$.
Finally, 
the distributional support 
of the transform of~$h$
is included in the distributional support
of~$c$.
\end{lemma}

%
To prove this, start with the fact that if~$\check c$
has a partial majorant~$F$ in~$L^p$,
and if~$w \in R(c)$, then
\begin{equation}\label{extremal-continuous}
1 = \int_{\R^n} \hat w|c| 
\le \int_{\R^n} \hat w\hat F
= \int_{\R^n} \overline w F
\le \|w\|_{p'}\|F\|_p.
\end{equation}
The 
second equality above is the instance of the Parseval relation that makes~$\int_{\R^n} \overline{\widehat{h'}}\hat f
= \int_{\R^n} \overline{h'} f$
when~$f \in L^p$ 
and~$h$
is the inverse transform
of a bounded function with bounded support.
%
%

Inequality~(\eqref{extremal-continuous})
makes~$\|w\|_{p'} \ge 1/\|F\|_p$
for every~$w$ in~$R(c)$,
so that~$K_p(c) \ge 1/\|F\|_p$. 
In particular,~$\|F\|_p \ge 1$ when~$K_p(c) = 1$.
Rescale~$c$
to reduce matters to the latter case,
and then let~$h$ be as specified in the statement of the lemma.

Let~$k = h|h|^{p'-2}$.
Fix a function~$w$ in the set~$R(c)$,
and let~$\phi(t)$
be equal to~$(\|h + tw\|_{p'})^{p'}$.
This 
has derivative~$p'\int \hat k\hat w$ at~$t=0$.
The quotients~$(h + tw)/(1 + t\int |c|\hat w)$
belong to the closure of~$R(c)$ in~$L^{p'}$
when~$t \ge 0$.
Take~$p'$-th powers of the~$L^{p'}$ norms of these
quotients, and require that the derivatives
with respect to~$t$ of these powers be nonnegative
at~$t=0$. The outcome is that~$\int(\hat k - |c|)\hat w \ge 0$
for all~$w$ in~$R(c)$.

It follows that~$\hat k \ge |c|$ almost everywhere
on the set where~$c \ne 0$.
On the other hand,~$k$ has~$L^p$ norm
equal to~$1$, which
is a lower bound for the the norms
of partial 
majorants
of~$\check c$ in~$L^p$.
So~$k$ must be the partial majorant of minimal~$L^p$ norm.

By definition, all functions in the set~$R(c)$
have transforms
whose distributional supports are included in the distributional support
of~$c$.
Then~$h$
has this property too. This completes the proof of the lemma.

To
deduce
the theorem,
first check that if~$p = 2j/(2j-1)$ for some integer~$j>1$,
and~$f \in L^p(\R^n)$,
then~$K_p(\hat f) \ge 1/\|f\|_p$.
To that end,
write~$\hat f = \varepsilon|\hat f|$
for a measurable function~$\varepsilon$ with absolute-value~$1$.
Now let~$h$ be any function in~$R(|\hat f|)$, and let~$h'$ be the inverse transform
of the product~$\varepsilon h$.
Then
\[
1 = \int \hat h|\hat f|
= \int \widehat{h'}\overline{\hat f}
= \int h'\overline f.
\]
By the upper majorant property,~$h' \in L^{2j}$
with~$\|h'\|_{2j} \le \|h\|_{2j}$. 
H\"older's inequality then yields that
\[
1 \le \|h'\|_{2j}\|f\|_p
\le \|h\|_{2j}\|f\|_p.
\]
This makes~$1/\|f\|_p$ a lower bound for the~$L^{p'}$ norm
of every such function~$h$,
and hence for~$K_p(\hat f)$.
In particular,~$K_p(\hat f) > 0$, and~$f$
has a minimal partial majorant,~$F$ say, in~$L^p$.
%
To see that~$\|F\|_p \le ~\|f\|_p$,
again rescale
to the case where~$K_p(\hat f) = 1$.

In that case,
write~$h$
as a limit in~$L^{p'}$ norm of a sequence~$(h_n)$
of members of the set~$R(c)$. The transforms
of the functions~$h_n$ are nonnegative bounded functions
with bounded supports. The functions~$k_n := (h_n)^j(\overline{h_n})^{j-1}$
belong to~$L^p$, 
and 
the sequence~$(k_n)$ converges in~$L^p$ to~$k$,
so that~$\left(\widehat{k_n}\right)$ converges in~$L^{p'}$
to~$\hat k$.

The
transform of~$k_n$
is
equal to the convolution
of~$j$ copies of~$\widehat{h_n}$ with~$j-1$ copies
of~$\widehat{\overline{h_n}}$. 
It 
follows
that~$\widehat{k_n}$ is nonnegative almost everywhere.
Moreover, if a test function~$\psi$ vanishes on the closure of
the algebraic sum of~$j$ copies of~$S$ and~$j-1$ copies of~$-S$,
then~$\int k_n\psi = 0$ for all~$n$.
So~$\int k\psi = 0$,
and the support of~$k$
is
included in
the closure of
that sum of sets.
Since~$\hat k$ is nonnegative almost everywhere,
it is
a full majorant of~$\check c$.
It must be the one of minimal norm because it has minimal norm among partial majorants.
\end{proof}



\begin{remark}
\label{rm:MissingContinuity}

Suitably rescaling the minimal majorant~$G$ again gives one with the same~$L^p$ norm as~$f$.
Proving the existence of such a majorant
by the method used to prove Theorem~\ref{th:Maximizer}
seems harder here, because
it is not clear
that
a suitable
counterpart of
the function~$\Phi_f$
is
continuous.

\end{remark}

\end{appendix}

\bibliographystyle{amsplain}

\end{document}